\newcommand{\witi}{\widetilde}
\newcommand{\cc}{{\mathbb C}}
\newcommand{\calg}{{\mathcal G}}
\newcommand{\calr}{{\mathcal R}}
\newcommand{\calb}{{\mathscr B}}
\newcommand{\calo}{{\mathcal O}}
\newcommand{\beq}{\begin{eqnarray*}}
    \newcommand{\feq}{\end{eqnarray*}}
\newcommand{\beqn}{\begin{eqnarray}}
\newcommand{\feqn}{\end{eqnarray}}
\newtheorem{theorem}{Theorem}
\newtheorem*{conj*}{Conjecture}
\makeatletter \@addtoreset{theorem}{section}\makeatother
\newcommand{\nn}{{\mathbb N}}
\makeatletter \@addtoreset{theorem}{section}\makeatother
\newcommand{\calv}{{\mathcal V}}
\makeatletter \@addtoreset{theorem}{section}\makeatother
\newtheorem{lemma}[theorem]{Lemma}
\newtheorem*{theorem*}{Theorem}
\newtheorem{proposition}[theorem]{Proposition}
\newtheorem{example}[theorem]{Example}
\def\BState{\State\hskip-\ALG@thistlm}
\newlength\myindent
\title{Horizontal visibility graph of a random restricted growth sequence}
\author{Toufik~Mansour\thanks{ Department of Mathematics, University of Haifa, 199 Abba Khoushy Ave, 3498838 Haifa, Israel;
        \newline e-mail: tmansour@univ.haifa.ac.il} \and
        Reza~Rastegar\thanks{Occidental Petroleum Corporation, Houston, TX 77046 and Departments of Mathematics and Engineering, University of Tulsa, OK 74104, USA - Adjunct Professor; e-mail:  reza\_rastegar2@oxy.com}
        \and
        Alexander~Roitershtein \thanks{Department of Statistics, Texas A\&M University, College Station, TX 77843, USA; \newline e-mail: alexander@stat.tamu.edu}
}
\begin{document}
\maketitle
\begin{abstract}
We study the distributional properties of horizontal visibility graphs associated with random restrictive growth sequences and random set partitions of size $n.$ Our main results are formulas expressing the expected degree of graph nodes in terms of simple explicit functions
of a finite collection of Stirling and Bernoulli numbers.
\end{abstract}

\noindent{\em MSC2010: } Primary 05A18; Secondary 05A15, 05C82 \\
\noindent{\em Keywords}: restricted growth function; partitions of a set; horizontal visibility graph.

\section{Introduction and statement of results}
We study here horizontal visibility graphs of restricted growth sequences. The latter class of sequences is of interest both independently and in connection with set partitions \cite{Mb}, $q$-analogues \cite{maybe1}, certain combinatorial matrices \cite{galvin},  bargraphs \cite{bargraph}, and Gray codes \cite{conflitti1}.
\par
A horizontal visibility graph (HVG)  \cite{luque3} constitutes a paradigmatic complex network representation of sequential data,  typically used to reveal order structures within the data set \cite{complex, survey}. HVG-based algorithms have been employed to characterize fractal behavior of dynamical systems \cite{frac, bhurst}, study canonical routes to chaos (see \cite{chaos} and references therein), discriminate between chaotic and stochastic time series \cite{dnc}, and test time series irreversibility \cite{erev}. There is a growing body of literature using these combinatorial data analysis techniques in applied fields such as optics \cite{laser}, fluid dynamics \cite{turba},  geophysics \cite{seim}, physiology and neuroscience \cite{madl, neuro}, finance \cite{finance1}, image processing \cite{image}, and more \cite{complex, survey}. For other graph theoretic methods of applied time series analysis as well as many fruitful extensions of the horizontal visibility algorithm, we refer to recent surveys \cite{complex, survey}.
\par
From a combinatoric point of view, HVGs are outerplanar graphs with a Hamiltonian path, an important subclass of so-called \textit{non-crossing graphs} of algebraic combinatorics \cite{gutin}. An illuminating characterization of HVGs  using ``one-point compactified" times series and tools of algebraic topology   is obtained in a recent work \cite{cute}. Theoretical body of work on the HVGs includes studies of their degree distributions \cite{degree1, degree}, information-theoretic \cite{info, math1} and other \cite{peak} topological characteristics, motifs \cite{motif, math3}, spectral properties \cite{math, luque3}, and dependence of graph features on the parameter for a specific parametric family of chaotic \cite{wuba} or stochastic processes \cite{bhurst, stable}. For more, see a recent comprehensive survey \cite{survey} and an extensive review of earlier results \cite{nunez}.
\par
In this paper, our main focus is on the degree properties of the  horizontal visibility graph associated with a random restricted growth sequence.
Let $\pi= \pi_1\cdots \pi_n$ be a sequence of elements of a totally ordered set.  We say that $(\pi_i, \pi_j)$ is a \textit{strong visible pair} if $$\max_{i<\ell<j}\pi_\ell<\min\{\pi_i, \pi_j\},$$ where we use the usual convention that $\max \emptyset = -\infty.$ Similarly, we refer to $(\pi_i, \pi_j)$ as a \textit{weak visible pair} if $$\max_{i<\ell<j}\pi_\ell\leq\min\{\pi_i, \pi_j\}.$$ We denote by $\calv_\pi$ the set of all strong visible pairs of $\pi,$ and let $V_\pi = \mbox{Card}(\calv_\pi)$ be the number of strong visible pairs in the sequence $\pi$. For example, $$\calv_{12122}=\{(1,2), (2,3), (3,4), (4,5), (2,4)\}, \quad V_{12122}=5.$$ We use the above notation with addition of the superscript $w$ to denote the corresponding weak visibility pairs statistics. For example, $$\calv^{w}_{12122}=\{(1,2), (2,3), (3,4), (4,5), (2,4), (2,5)\}, \quad V^{w}_{12122}=6.$$ The graph $\calg_\pi:=([n], \calv_\pi)$ with the set of nodes $[n]:=\{1,2,\ldots,n\}$ is the \textit{horizontal visibility graph} associated with $\pi$ \cite{luque3}. For $i \in [n],$ we denote by $d_\pi(i)$ the degree of the node $i$ in the visibility graph $\calg_\pi.$ We set $e_\pi (i,j)=1$ when $(i,j)\in \calv_\pi$ and $e_\pi (i,j)=0$ otherwise. Thus,
\beqn
\label{d_n_exp}
d_\pi(i) = \sum_{j\in [n]\setminus \{i\}} e_\pi(i,j).
\feqn
We now turn to the definition of a restricted growth sequence. A sequence of positive integers $\pi=\pi_1\pi_2\cdots \pi_n\in \nn^n$ is called
a restricted growth sequence if
\beq
\pi_1 = 1\qquad \mbox{\rm and}  \qquad \pi_{j+1} \leq 1 + \max\{\pi_1,\cdots,\pi_j\} \ \ \ \text{for all} \ 1\leq j <n.
\feq
There is a bijective connection between these sequences and canonical set partitions.
A \textit{partition} of a set $A$ is a collection of non-empty, mutually disjoint subsets, called \textit{blocks},
whose union is the set $A$. A partition $\Pi$ with $k$ blocks is called a \textit{$k$-partition} and denoted by $\Pi = A_1|A_2|\cdots|A_k$.
A $k$-partition $A_1|A_2|\cdots |A_k$ is said to be in the \textit{standard form} if the blocks $A_i$ are labeled in such a way that
\beq
\min A_1 < \min A_2<\cdots< \min A_k.
\feq
The partition can be represented equivalently by the \textit{canonical sequential form} $ \pi_1\pi_2\ldots\pi_n,$ where
$\pi_i\in [n]$ and $i\in A_{\pi_i}$ for all $i$ \cite{Mb}. In words, $\pi_i$ is the label of the partition block that contains $i.$
It is easy to verify that a word $\pi\in [k]^n$ is a canonical representation of a $k$-partition of $[n]$ in the standard form
if and only if it is a restricted growth sequence \cite{Mb}.
\begin{example}
For instance, canonical partition $\{1,4,7\}\,|\, \{2,3,6,9\}\,|\, \{5,8\}$ in the canonical sequential form is $\pi=122132132,$ where $\pi_3=2$ indicates that $3$ belongs to the second block $\{2,3,6,9\},$ etc. The (weak and strong) visibility graphs of $\pi$ are given in Fig.~\ref{figvisgraphs1} below.
\begin{figure}[ht!]
	\begin{center}
		\begin{picture}(280,70)(30,-60)
		\setlength{\unitlength}{1.mm}
		\put(0,-5){\multiput(0,0.04)(5,0.04){11}{\put(2.5,-10){$\bullet$}}
			\put(2.5,-8.5){\line(1,0){50}}
			\qbezier(9,-8)(13,-1)(18,-8)\qbezier(33,-8)(41,1)(48,-8)
			\qbezier(19,-8)(25,1)(33,-8)\qbezier(24,-8)(28,-3)(33,-8)}
		\put(70,0){
			\put(0,-5){\multiput(0,0.04)(5,0.04){11}{\put(2.5,-10){$\bullet$}}
				\put(2.5,-8.5){\line(1,0){50}}
				\qbezier(9,-8)(13,-1)(18,-8)\qbezier(33,-8)(41,1)(48,-8)
				\qbezier(19,-8)(25,1)(33,-8)\qbezier(24,-8)(28,-3)(33,-8)
				\qbezier(19,-8)(34,11)(48,-8)}
		}
		\end{picture}
	\end{center}
\caption{On he left is a picture of the strong visibility graph of the sequence $12132132231.$ On the right, is the weak visibility graph associated with same sequence. }\label{figvisgraphs1}
\end{figure}
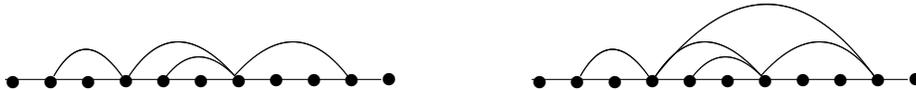
\end{example}
We denote by $\calr_n$ the set of all restricted growth sequences of length $n.$ For a given $\pi\in \calr_n$, we let $\calo(\pi):=\mbox{Card}\{\pi_i: i\in [n]\},$ the number of different letters in the word $\pi.$ For example, $\calo(1231) = 3$.  We denote by $\calr_{n,k}$ the set of all restricted growth sequences $\pi$ with $\calo(\pi)=k.$ Clearly, $\calr_n:=\bigcup_{k\in [n]}\calr_{n,k}.$
\par
It is well-known that $\mbox{Card}(\calr_{n,k})= S_{n,k}$ and $\mbox{Card}(\calr_n)= B_n$ where $S_{n,k}$ is a Stirling number of second kind and $B_n$ is the $n$-th Bell number \cite{Mb}. The Stirling numbers can be introduced algebraically in several different ways. For instance,
\beqn
\label{SnkIdent}
\frac{x^k}{\prod_{j=1}^k(1-jx)}=\sum_{n\geq0}S_{n,k}x^n,\qquad \qquad \forall\,k\in\nn.
\feqn
Alternatively, one can define the sequence of Stirling numbers of the second kind as the solution to the recursion
\beqn
\label{compa}
S_{n,k}=kS_{n-1,k}+S_{n-1,k-1},\qquad n,k\in\nn,\,k\leq n,
\feqn
with $S_{0,0}=1$ and $S_{0,n}=0.$ The sequence of Bell numbers $(B_n)_{n\geq 0}$ can be then defined, for instance, through the formula
$B_n=\sum_{k=0}^n S_{n,k},$ or, recursively via the formula $B_{n+1}=\sum_{k=0}^n \binom{n}{k}B_k$
with $B_0=1,$ or through Dobinski's formula \cite{comtet}
\beqn
\label{doob}
B_n=\frac{1}{e}\sum_{m=0}^{\infty}\frac{m^n}{m!}, \qquad\qquad n\geq 0.
\feqn
In what follows, we denote a random restricted growth sequence, sampled uniformly from $\calr_{n,k}$ (resp. $\calr_{n}$) by $\pi^{(n)}$ (resp. $\pi^{(n,k)}$). That is,
\beq
P(\pi^{(n,k)} = \pi) = \frac{1}{S_{n,k}} \quad \mbox{for all} \quad \pi \in \calr_{n,k},
\feq
and
\beq
P(\pi^{(n)}  = \pi) = \frac{1}{B_n} \quad \mbox{for all} \quad \pi \in \calr_{n}.
\feq
\par
We denote by $\calg_{n,k}:=\calg_{\pi^{(n,k)}}$ (resp. $\calg_n:=\calg_{\pi^{(n)}}$) the HVG of the random restrictive
growth sequence $\pi^{(n,k)}$ (resp. $\pi^{(n)}$). Furthermore, we use the notations $e_n(.,.),$ $d_n(.),$ and $V_n$ to denote, respectively, $e_{\pi^{(n)}}(.,.),$ $d_{\pi^{(n)}}(.),$ and $V_{\pi^{(n)}}.$ See Fig.~\ref{fig:gn_instances} below for two instances of visibility graphs of uniformly sampled restrictive growth sequences of length $n=200.$

\begin{figure}
\centering
\begin{tabular}{ccc}
\includegraphics[scale=0.5]{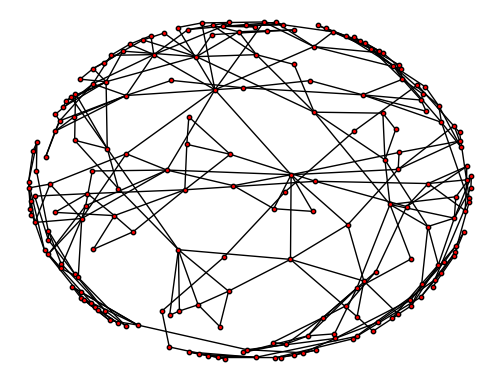}&\qquad$\mbox{}$\qquad&
\includegraphics[scale=0.5]{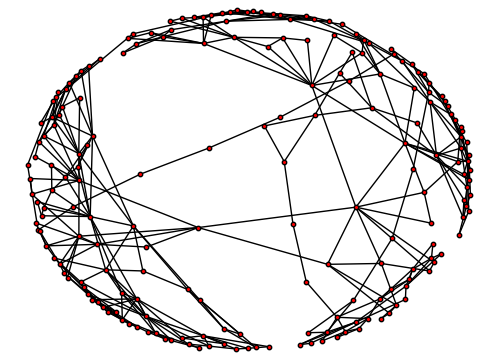}
\end{tabular}
\caption{An instance of $\calg_{200}$ (on the left) and the corresponding $\calg_{200}^w$ (on the right).}
\label{fig:gn_instances}
\end{figure}
\par
For any $k\in\nn,$ we define the generating function
\beqn
P_k(x,q) := \sum_{n=k}^\infty x^n S_{n,k} E\big(q^{V_\pi}|\pi\in\calr_{n,k}\big)
= \sum_{n=k}^\infty  \sum_{\pi \in \calr_{n,k}} x^n q^{V_\pi}, \quad x, q\in \cc. \label{Pkxq-def}
\feqn
Knowing an explicit form of \eqref{Pkxq-def}, would in principle give us the distribution of $V_n$ in full details for all $n\in\nn.$ Unfortunately,
so far we were unable to find an explicit form of $P_k(x,q).$ In this paper, we calculate instead the following generating function:
\beq
\underline Q(x,y):= \sum_{k\geq1} y^k \sum_{n\geq 0} \frac{x^n}{n!} B_n E(V_\pi|\pi\in\calr_{n,k}).
\feq
\begin{theorem}
\label{thm1}
We have:
\beq
\underline Q(x,y)= \frac{1}{y}\int_0^xe^{-ye^{x-t}-t}\int_0^te^{ye^{x-r}+r}(e^{r-x}+y)\underline T(r,ye^{x-r})\,drdt,
\feq
where
\beq
\underline T(x,y)&=&y^3\int_0^x(x-t)e^{ye^t-y}\int_0^tEi(1,ye^r)e^{ye^r+2r}\,drdt
\\
&&
\qquad
+y\int_0^x(t-x)e^{ye^t-y}(Ei(1,ye^t)e^{ye^t}(ye^t-1)-ye^t)\,dt
\\
&&
\qquad\qquad
+y(1-y)\int_0^x(t-x)e^{ye^t}Ei(1,ye^t)\,dt,
\feq
and $Ei(1,z)=\int_1^{\infty}\frac{e^{-zt}}{t}\,dt$ is the exponential integral.
\end{theorem}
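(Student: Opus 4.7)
My starting point is linearity of expectation applied to $V_\pi = \sum_{1\le i<j\le n} e_\pi(i,j)$. Interpreting the defining sum of $\underline Q$ as a weighted enumeration
\[
\underline Q(x,y) = \sum_{n\ge 0}\frac{x^n}{n!}\sum_{1\le i<j\le n}\sum_{\pi\in\calr_n}y^{\calo(\pi)}e_\pi(i,j),
\]
I reduce the problem to counting, weighted by $y^{\calo(\pi)}$, the restricted growth sequences $\pi\in\calr_n$ for which a distinguished pair of positions $(i,j)$ with $i<j$ belongs to $\calv_\pi$. Setting $m:=\min(\pi_i,\pi_j)$, the visibility condition becomes $\pi_\ell\in\{1,\ldots,m-1\}$ for all $i<\ell<j$, and I will sum over $m$ at the end.

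Next I would perform a three-part factorization of any such $\pi$ into a \emph{prefix} $\pi_1\cdots\pi_i$, a \emph{middle} $\pi_{i+1}\cdots\pi_{j-1}$, and a \emph{suffix} $\pi_j\cdots\pi_n$. Because $\pi_i\ge m$, by position $i$ the running maximum has already reached at least $m$, so every value in $\{1,\ldots,m-1\}$ is admissible in the middle without affecting the RGS property; hence the middle is a \emph{free word} over an alphabet of size $m-1$ and contributes $e^{(m-1)x}$ to the EGF. The prefix, combined with the outer $\sum_k y^k$, gives a factor of the bivariate EGF $\sum_{n,k}S_{n,k}y^k x^n/n! = e^{y(e^x-1)}$ of set partitions (with a distinguished position equal to $\pi_i\ge m$), while the suffix gives a similar factor, shifted according to the running maximum at position $j$ and split by whether or not the suffix introduces fresh block labels. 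After treating the relevant subcases (namely $\pi_i=m$, $\pi_j=m$, or both, and whether or not position $j$ itself creates a new block), one convolves the three EGFs — which, via Cauchy's product rule for EGFs, translates into nested integrals over an auxiliary variable $t\in[0,x]$.

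The summation over $m$ of expressions of the form $\sum_{m\ge 1}(ye^x)^m/m$ produces logarithmic factors which, after pairing with the prefactor $e^{-ye^x}$ from the partition EGF, collapse into the exponential integral $Ei(1,ye^t)$ appearing in the statement. The three terms of $\underline T(x,y)$ should correspond directly to the three subcases of the case split: (i) both endpoints equal $m$, (ii) exactly one of $\pi_i,\pi_j$ equals $m$ while the other is larger, and (iii) the suffix creates a new block at position $j$ itself. A further integration against the prefix contribution finally produces the outer integral expression for $\underline Q(x,y)$. I expect the principal obstacle to be the bookkeeping of block-label creation across the three segments: the global weight $y^{\calo(\pi)}$ couples them and must be distributed between prefix, middle, and suffix in a way consistent both with the bound $\pi_\ell\le m-1$ in the middle and with the RGS property at the seams $\ell=i$ and $\ell=j$. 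Getting this case split right — in particular correctly attributing, at each seam, whether the letter reuses an existing label or advances the running maximum — is what forces the apparently irreducible structure of $\underline T$ as a sum of three integrals rather than a single closed-form expression.
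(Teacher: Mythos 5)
Your route is genuinely different from the paper's, and it is worth saying what each buys before naming the problems. The paper does \emph{not} argue per pair: it decomposes every $\pi\in\calr_{n,k}$ at the first occurrences of letters as $1\pi^{(1)}2\pi^{(2)}\cdots k\pi^{(k)}$, derives recursions for the kernels $\witi M_k(x,q)$, $L_k(x,q)$, $M_k(x,q)$ (Proposition~\ref{mth1}), differentiates at $q=1$ (Proposition~\ref{pro1}), and converts the resulting recurrence in $k$ into linear PDEs for the exponential generating functions $\underline T(x,y)$ and $\underline Q(x,y)$, solved with initial conditions (Lemma~\ref{lem4}, with Maple's aid). Your plan — linearity of expectation over a marked pair $(i,j)$, a prefix/middle/suffix factorization with the middle a free word over $[m-1]$ — is essentially the analysis the paper itself performs, probabilistically via Stam's urn model, to prove Theorem~\ref{thm2}; transplanting it into EGF language and resumming over $(i,j)$ and $n$ is a legitimate alternative program, and your local observations are sound (in an RGS the middle letters are necessarily repeats, so the middle contributes $e^{(m-1)x}$ with no $y$-weight). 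One small point in your favor: your opening identity uses the weight $\sum_{\pi\in\calr_{n,k}}V_\pi=S_{n,k}E(V_\pi|\pi\in\calr_{n,k})$, which is what the paper's proof actually computes via $\frac{\partial}{\partial q}P_k(x,q)|_{q=1}$ (the $B_n$ in the displayed definition of $\underline Q$ appears to be a typo), so your starting object is the right one.

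As a proof of the stated formula, however, there are concrete gaps. First, the three segments do not decouple into a product of fixed EGFs as you assert: the suffix's generating function depends on the running maximum $t$ of the prefix at position $i$ — each element after $j$ may join any of the $t$ existing blocks, giving a factor $e^{tx}$, or open new ones — not merely on $m=\min(\pi_i,\pi_j)$, and the global weight $y^{\calo(\pi)}$ couples to $t$ as well. The repair is to track $t$ and absorb the coupling by the substitution $y\mapsto ye^{x-r}$ in the partition EGF; this is exactly why the theorem's outer integral contains $\underline T(r,ye^{x-r})$ and factors $e^{-ye^{x-t}}$. You flag this coupling as the "principal obstacle" but offer no mechanism to resolve it, and it is the heart of the derivation, not bookkeeping. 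Second, "Cauchy's product rule for EGFs" is the wrong tool: products of EGFs encode binomial convolution (shuffles of label sets), whereas concatenation of words at specified positions is an \emph{ordinary} convolution of lengths, which in EGF form is the Borel-type integral $\int_0^x\underline A(t)\,\underline B(x-t)\,dt$ (with a shift in the coefficient index); this integral calculus is indeed what produces the nested integrals in the statement, but the rule you cite, applied literally, would miscount. Third, the claimed one-to-one correspondence between your three endpoint subcases and the three terms of $\underline T$ is conjecture: in the paper $\underline T$ is the EGF of the recurrence-defined kernel $T_k(x)$ and its three-term shape emerges from solving a fourth-order linear PDE with initial data, so to finish your route you would still need to execute the sums over $m$, $a$ and $t$ (your expectation that harmonic-type sums paired with $e^{-ye^t}$ yield the $Ei(1,ye^t)$ factors is plausible, via $\sum_{k\geq1}H_k z^k/k!$-type identities) and verify agreement with the stated triple of integrals — none of which is carried out. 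In short: a viable program, closest in spirit to the paper's proof of Theorem~\ref{thm2}, but the decisive computations that pin down the specific formula of Theorem~\ref{thm1} are missing.
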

\begin{example}
First several terms of the generating function $\underline Q(x,1)$ are given by
\beq
x^2+\frac{5}{3}x^3+\frac{47}{24}x^4+\frac{113}{60}x^5+\frac{19}{12}x^6+\frac{1013}{840}x^7
+\frac{11429}{13440}x^8+\frac{204361}{362880}x^9 + \cdots
\feq
\end{example}
$\mbox{}$
\par
The proof of Theorem~\ref{thm1} is given in Section~\ref{proof1}. The solution is derived from a PDE for $\underline T$ which is obtained
in Lemma~\ref{lem4}. Our next result, Theorem~\ref{thm2}, gives a an alternative, closed form expression for $E(V_n)$ through a different, probabilistic approach.
\par
We partition $I_n= \{(i,j):1\leq i<j\leq n\}$ into three subsets
\beq
I_n^{(1)} &:=& \{ (1,j)\ : \ 3\leq j \leq n \}, \\
I_n^{(2)} &:=& \{ (i,i+1)\ : \ 1\leq i \leq n-1 \}, \\
I_n^{(3)} &:=& \{ (i,j)\ : \ 2 \leq i < j \leq n, j >  i+1 \}.
\feq
Clearly, $e_\pi(i,j)=0$ on $I_n^{(1)}$ and $e_\pi(i,j)=1$ on $I_n^{(2)}$ for all $\pi\in \calr_n.$ Therefore,
\beqn
\label{V_n_exp}
V_\pi = n-1 + \sum_{(i,j)\in I_n^{(3)}} e_{\pi}(i,j).
\feqn
The following theorem evaluates the probability that $(i, j)\in \calv_n$ for a given $(i, j)\in I_n^{(3)}$ in terms of explicit multi-linear polynomials
of $S_{k,i},$ $B_i,$ and Bernoulli numbers. By virtue of \eqref{d_n_exp}, the result immediately yields the average degree $E\big(d_n(i)\big)$ of any given node $i\in V_n$ and the average number of edges $E(V_n).$
\par
We  will use the following Bernoulli formula for Faulhaber polynomials \cite{comtet}:
\beqn
\label{rho}
\Psi_n(t):=\sum_{k=0}^t k^{n-1}=\frac{1}{n}
\sum_{\ell=0}^{n-1} \binom{n}{\ell} t^{n-\ell} \calb_\ell,\qquad \qquad n\in\nn,t\geq 0.
\feqn
where $\calb_\ell$ are Bernoulli numbers. The latter can be calculated, for example, using the recursion
\beq
\sum_{\ell=0}^{n-1}\binom{n}{\ell}\calb_\ell=0
\feq
with $\calb_0=1.$ See, for instance, \cite{comtet} for alternative definitions of Bernoulli numbers.
\par
We will also need the following well-known extension of Dobinski's identity \eqref{doob}.
For any integers $n,t\geq 0$ we have:
\beqn
\nonumber
\Theta_n(t)&:=&\frac{1}{e}\sum_{m=t}^{\infty} \frac{m^n}{(m-t)!} =
\frac{1}{e}\sum_{k=0}^{\infty} \frac{(k+t)^n}{k!} =
\frac{1}{e}\sum_{\ell=0}^n \binom{n}{\ell} t^{n-\ell}\sum_{k=0}^{\infty} \frac{k^\ell}{k!}
\\
&=&
\sum_{\ell=0}^n \binom{n}{\ell} t^{n-\ell} B_\ell, \label{Blextension}
\feqn
were in the last step we applied the original formula \eqref{doob}.
\begin{theorem} \label{thm2}
For all $n\geq 3$ and $(i,j)\in I_n^{(3)},$ we have
\beq
&& B_nP\big((i,j) \in \calv_n\big) = \sum_{t=1}^{i-1} S_{i-1,t}\Theta_{n-j+1}(t)\Psi_{j-i}(t-1)    \\
&& \quad + \sum_{t=1}^{i-1} S_{i-1,t} \Theta_{n-j}(t) \sum_{a=1}^t \big\{ -a(a-1)^{j-i-1}+\Psi_{j-i}(a-1) \big\}   \\
&& \quad +  \sum_{t=1}^{i-1} S_{i-1,t} \Theta_{n-j+1}(t+1)t^{j-i-1}+\sum_{t=1}^{i-1} S_{i-1,t} \Theta_{n-j}(t+1)\big\{-(t+1)t^{j-i-1}+\Psi_{j-i}(t)\big\}.
\feq
\end{theorem}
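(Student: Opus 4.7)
The plan is to enumerate those $\pi\in\calr_n$ for which $(i,j)\in\calv_\pi$ by decomposing $\pi$ into the prefix $\pi_1\cdots\pi_{i-1}$, the letter $\pi_i$, the middle $\pi_{i+1}\cdots\pi_{j-1}$, the letter $\pi_j$, and the suffix $\pi_{j+1}\cdots\pi_n$, and counting the choices in each block independently. I would first condition on $t=\max\{\pi_1,\ldots,\pi_{i-1}\}\in\{1,\ldots,i-1\}$, which by the canonical bijection with standard-form set partitions contributes a factor of $S_{i-1,t}$ for the prefix.

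A key auxiliary fact, used throughout, is that for any RGS with running maximum $s$ the number of RGS-extensions by $m$ further letters equals $\Theta_m(s)$ as given in \eqref{Blextension}. I would establish this by checking that both sides satisfy $N(s,m)=s\,N(s,m-1)+N(s+1,m-1)$ with $N(s,0)=1$, obtained by splitting on whether the next letter repeats an existing value ($s$ choices, running max unchanged) or is the new maximum ($1$ choice, after which the running max becomes $s+1$); a direct binomial manipulation shows that $\Theta_m(s)=\sum_{\ell=0}^m\binom{m}{\ell}s^{m-\ell}B_\ell$ obeys the same recursion.

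Next, I would split the count into \emph{Case A} ($\pi_i=a\in\{1,\ldots,t\}$, post-$i$ maximum still $t$) and \emph{Case B} ($\pi_i=t+1$, post-$i$ maximum $t+1$). Within each case I would further split on whether $\pi_j\geq\pi_i$, in which visibility requires $\pi_\ell<\pi_i$ for $i<\ell<j$, or $\pi_j<\pi_i$, in which it requires $\pi_\ell<\pi_j$. In either sub-case the middle contributes $(c-1)^{j-i-1}$ fillings with $c=\min(\pi_i,\pi_j)$, because every letter in $\{1,\ldots,c-1\}$ automatically respects the RGS inequality (the running max just before the middle is already at least $c-1$); the suffix of length $n-j$ then contributes $\Theta_{n-j}(s)$ with $s$ equal to the running maximum immediately after $\pi_j$. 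A direct enumeration of the four sub-cases then yields closed-form Case A and Case B totals as polynomials in $t$ involving the factors $\Theta_{n-j}(t)$, $\Theta_{n-j}(t+1)$ and $\Theta_{n-j}(t+2)$.

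The remaining work is algebraic: using the identities $\Theta_{m+1}(s)=s\,\Theta_m(s)+\Theta_m(s+1)$ and $\Psi_{j-i}(t)=\Psi_{j-i}(t-1)+t^{j-i-1}$, together with $\sum_{a=1}^t(a-1)^{j-i-1}=\Psi_{j-i}(t-1)$ and $\sum_{a=1}^t\Psi_{j-i}(a-1)=\sum_{k=0}^{t-1}(t-k)\,k^{j-i-1}$, I would rearrange the Case A total into Terms 1 and 2 of the claim, and the Case B total (after eliminating $\Theta_{n-j}(t+2)$ through the recursion $\Theta_{n-j}(t+2)=\Theta_{n-j+1}(t+1)-(t+1)\Theta_{n-j}(t+1)$) into Terms 3 and 4; multiplying by $S_{i-1,t}$ and summing over $t\in\{1,\ldots,i-1\}$ gives the stated identity. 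The main obstacle is the careful bookkeeping across these four sub-cases and the subsequent algebraic rearrangement; the degenerate sub-cases $a=1$ or $b=1$ contribute zero thanks to $0^{j-i-1}=0$ for $j>i+1$, but this must be verified explicitly to confirm that the indicated summation ranges are consistent with the formula.
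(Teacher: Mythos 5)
Your proposal is correct, and it reaches the theorem by a genuinely different route than the paper. The paper's proof is probabilistic: it generates a uniform set partition by Stam's urn model, conditioning on the number of boxes $M=m$ drawn from $\mu_n(m)=\frac{m^n}{e\,m!\,B_n}$; the prefix statistics enter through $P_m(N_{i-1}=t)=\frac{S_{i-1,t}}{m^{i-1}}\frac{m!}{(m-t)!}$, the middle letters are handled via independent uniform placements (each contributing a factor $\frac1m$, as in \eqref{as_gen} and \eqref{as_gen2}), and the $\Theta$ factors only appear at the very end, when the conditional probability \eqref{PTij} is averaged over $m$ against $\mu_n$ using the Dobinski-type sums $\sum_{m\ge t}\frac{m^{n-j}}{(m-t)!}=e\,\Theta_{n-j}(t)$. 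You instead count words of $\calr_n$ directly and deterministically: the prefix with running maximum $t$ gives $S_{i-1,t}$, and your extension-count lemma --- that the number of RGS-continuations of length $m$ from running maximum $s$ equals $\Theta_m(s)$, proved via the shared recursion $N(s,m)=s\,N(s,m-1)+N(s+1,m-1)$, which $\Theta$ obeys because $\Theta_m(s)=s\,\Theta_{m-1}(s)+\Theta_{m-1}(s+1)$ follows at once from the representation $\Theta_m(s)=\frac1e\sum_{k\ge0}\frac{(k+s)^m}{k!}$ --- replaces the averaging over $m$ entirely. I checked the bookkeeping: your Case A total, $\sum_{a=1}^t\bigl[(t-a+1)(a-1)^{j-i-1}\Theta_{n-j}(t)+(a-1)^{j-i-1}\Theta_{n-j}(t+1)+\Psi_{j-i}(a-2)\,\Theta_{n-j}(t)\bigr]$, collapses to Terms 1 and 2 of the statement via $\Theta_{n-j+1}(t)=t\,\Theta_{n-j}(t)+\Theta_{n-j}(t+1)$ and $\Psi_{j-i}(a-1)=\Psi_{j-i}(a-2)+(a-1)^{j-i-1}$, and your Case B total, $t^{j-i-1}\bigl(\Theta_{n-j}(t+1)+\Theta_{n-j}(t+2)\bigr)+\Psi_{j-i}(t-1)\,\Theta_{n-j}(t+1)$, matches Terms 3 and 4 after eliminating $\Theta_{n-j}(t+2)$ exactly as you indicate; the degenerate boundary $a=1$ indeed vanishes since $j>i+1$ forces $0^{j-i-1}=0$. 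As for what each approach buys: yours is elementary and self-contained, and gives $\Theta_m(s)$ a direct combinatorial meaning as an extension count, so the identity holds term-by-term as an exact enumeration rather than as an expectation; the paper's urn model buys independence of the ball placements, which makes the middle-block computation a clean product of conditional probabilities and, more importantly, reuses the same machinery verbatim for the weak-visibility analysis of Theorem \ref{thm3}, where your sub-case enumeration would have to be redone with equality cases.
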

\par
\begin{figure}[h!b]
\centering
\begin{minipage}{.5\textwidth}
\centering
\includegraphics[scale=.4]{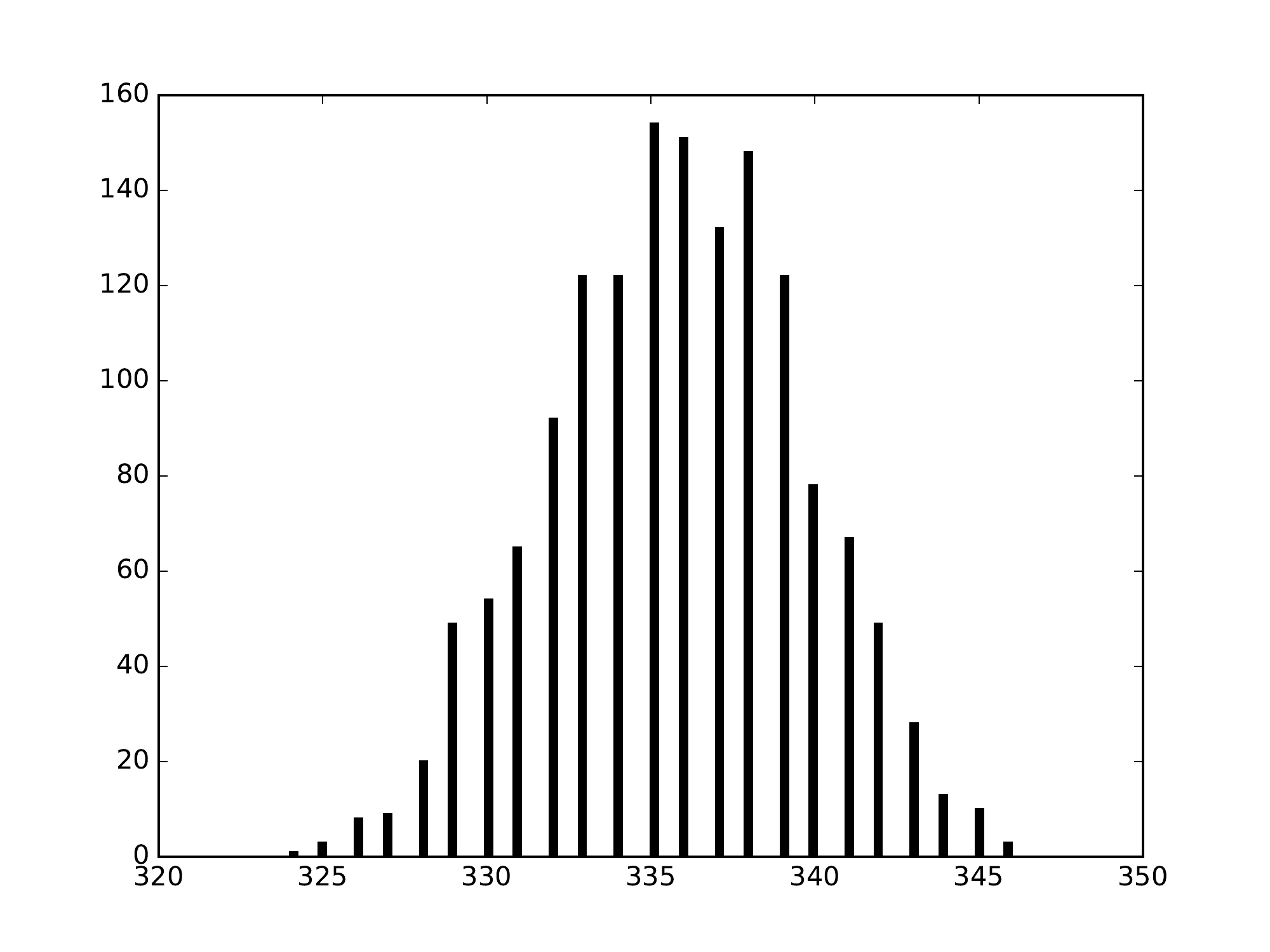}
\end{minipage}%
\begin{minipage}{.5\textwidth}
\centering
\includegraphics[scale=0.4]{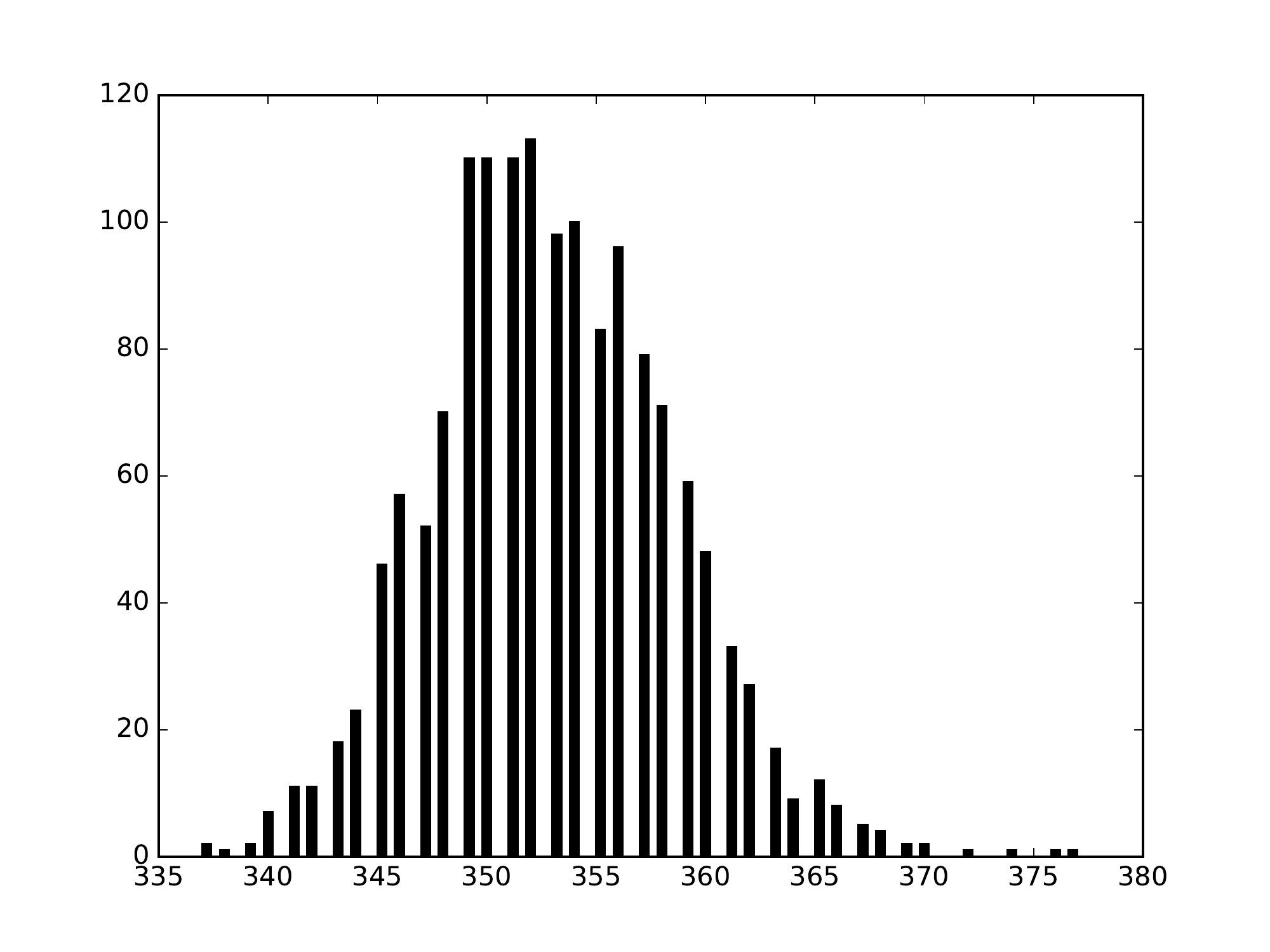}
\end{minipage}
\caption{Empirical distributions of $V_{200}$ (left) and  $V^w_{200}$ (right) based on $1500$ samples. }
\end{figure}
\par
The proof of Theorem~\ref{thm2} is deferred to Section~\ref{pr2}.
We next evaluate the probability that for a given pair of nodes $i,j\in [n],$ we have $(i,j)\in \calv^w_n$ but $(i,j)\notin \calv_n$.
\begin{theorem}
\label{thm3}
The following holds true for $n\geq 3:$
\item [(i)] If $(i,j) \in I_n^{(3)},$ then
\beq
&&B_n P((i,j)\in \calv^w_n\setminus \calv_n) =
\\
&&
\qquad
\sum_{t=1}^{i-1} S_{i-1,t} \Theta_{n-j}(t)\big\{-t^{j-i} + t^{j-i-1}+2\Psi_{j-i}(t-1) \big\}   \\
&& \qquad \quad +\sum_{t=1}^{i-1} S_{i-1,t}\Theta_{j-i+1}(t) t^{j-i-1}
- \sum_{t=1}^{i-1} S_{i-1,t} \Theta_{j-i}(t+1)\big\{(t-1)t^{j-i-1}  +t(t+1)^{j-i-1}\big\} \\
&& \qquad \quad + \sum_{t=1}^{i-1} S_{i-1,t}\Theta_{j-i+1}(t+1)\big\{(t+1)^{j-i-1}-t^{j-i-1}\big\} .
\feq
\item [(ii)] If $(i,j) \in I_n^{(1)},$ then
\beq
P((i,j)\in \calv^w_n\setminus \calv_n) = \frac{B_{n-j+i+1}}{B_n}.
\feq
\end{theorem}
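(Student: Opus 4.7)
The plan is to mirror the proof of Theorem~\ref{thm2}, adapting the combinatorial decomposition to the weak-not-strong criterion $\max_{i<\ell<j}\pi_\ell=\min\{\pi_i,\pi_j\}$ in place of the strict inequality used there. Part~(ii) is handled at once: for $(i,j)=(1,j)\in I_n^{(1)}$ we have $\pi_1=1$ and the intermediates are all $\ge 1$, so $(1,j)$ can never be strongly visible (since $j\ge 3$), while weak visibility is equivalent to $\pi_2=\pi_3=\cdots=\pi_{j-1}=1$. The map $(\pi_j,\ldots,\pi_n)\mapsto(1,\pi_j,\ldots,\pi_n)$ is then a bijection between valid completions of such a sequence and arbitrary restricted growth sequences of length $n-j+2$, which gives exactly $B_{n-j+2}=B_{n-j+i+1}$ sequences and the stated probability after dividing by $B_n$.

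For Part~(i), I would enumerate $\pi\in\calr_n$ for which $(i,j)\in\calv^w_n\setminus\calv_n$ by conditioning successively on the prefix, the value of $\pi_i$, the intermediate values, and the value of $\pi_j$. (a)~The prefix $\pi_1\cdots\pi_{i-1}$ is sorted by $t:=\max\{\pi_1,\ldots,\pi_{i-1}\}$, contributing a factor of $S_{i-1,t}$. (b)~Split on $\pi_i$: Case~A has $\pi_i=a\in\{1,\ldots,t\}$ (max-so-far remains $t$) and Case~B has $\pi_i=t+1$ (max-so-far becomes $t+1$). (c)~Setting $m:=\min\{\pi_i,\pi_j\}$, the weak-not-strong condition forces the $j-i-1$ intermediate letters to lie in $\{1,\ldots,m\}$ with at least one equal to $m$, contributing $m^{j-i-1}-(m-1)^{j-i-1}$ configurations; since $m\le\max\{t,t+1\}$, the intermediates do not advance the max-so-far beyond the value set by $\pi_i$. (d)~The choice of $\pi_j$ and of the suffix $\pi_{j+1}\cdots\pi_n$ is then counted by $\Theta_{n-j}(s)$, where $s\in\{t,t+1,t+2\}$ is the max-so-far at position $j$; this further branches according to whether $\pi_j<\pi_i$, $\pi_j=\pi_i$, or $\pi_j>\pi_i$, and in Case~B whether $\pi_j\in\{1,\ldots,t\}$, $\pi_j=t+1$, or $\pi_j=t+2$.

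Assembling the contributions of all sub-cases, I would use the telescoping identity $\sum_{b=1}^{m}\bigl[b^{j-i-1}-(b-1)^{j-i-1}\bigr]=m^{j-i-1}$ to collapse the sums that arise from varying the intermediate max $m$, together with an Abel-type manipulation of the form $\sum_{a=1}^{t}(t-a+1)\bigl[a^{j-i-1}-(a-1)^{j-i-1}\bigr]=\Psi_{j-i}(t)$ to recover the Faulhaber polynomials $\Psi_{j-i}(\cdot)$ appearing in the statement. The identity $\Theta_{n-j+1}(s)=s\,\Theta_{n-j}(s)+\Theta_{n-j}(s+1)$, which follows from the combinatorial meaning of $\Theta$ encoded in \eqref{Blextension}, will consolidate the choice of $\pi_j$ into the suffix factor where convenient. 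The main obstacle will be the meticulous bookkeeping across these sub-cases — particularly avoiding double-counting at the boundary $\pi_j=\pi_i$ in Case~A, correctly handling the three possibilities for $\pi_j$ in Case~B (each of which yields a different value of the max-so-far at position~$j$), and recombining the resulting polynomial-in-$t$ coefficients into exactly the combinations $-t^{j-i}+t^{j-i-1}+2\Psi_{j-i}(t-1)$, $(t-1)t^{j-i-1}+t(t+1)^{j-i-1}$, and $(t+1)^{j-i-1}-t^{j-i-1}$ appearing on the right-hand side.
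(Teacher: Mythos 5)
Your plan is sound and, carried out, does prove the theorem; but it is a genuinely different route from the paper's. The paper never enumerates restricted growth sequences directly here: it stays inside Stam's urn model from the proof of Theorem~\ref{thm2}, writing $P\big((i,j)\in \calv^w_n\setminus\calv_n\big)=E\big(P_M\big(\max_{i<\ell<j}X_\ell=\min\{X_i,X_j\}\big)\big)$, conditioning on the number $N_i$ of occupied boxes (whose law $P_m(N_{i-1}=t)=\frac{S_{i-1,t}}{m^{i-1}}\cdot\frac{m!}{(m-t)!}$ encodes your prefix factor), splitting the event as $\{\max_{i<\ell<j}X_\ell=a,\ X_j\ge a\}\cup\{\max_{i<\ell<j}X_\ell=X_j<a\}$ to obtain $P_m\big(\max_{i<\ell<j}X_\ell=\min\{a,X_j\}\,\big|\,N_i=t\big)=m^{-(j-i)}\big((m-a+1)a^{j-i-1}-(m-a)(a-1)^{j-i-1}\big)$, and only at the end averaging over $M\sim\mu_n$ and interchanging summations so that the $\Theta$'s appear via \eqref{Blextension}. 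Your deterministic decomposition---prefix weighted by $S_{i-1,t}$, the split $\pi_i\le t$ versus $\pi_i=t+1$, intermediates counted by $m^{j-i-1}-(m-1)^{j-i-1}$ with $m=\min\{\pi_i,\pi_j\}$, suffixes counted by $\Theta_{n-j}(s)$---is an exact de-probabilized version of this, and I checked that it reproduces the paper's penultimate display term by term; your quoted identities $\sum_{a=1}^t(t-a+1)\big(a^{j-i-1}-(a-1)^{j-i-1}\big)=\Psi_{j-i}(t)$ and $\Theta_{\ell+1}(s)=s\,\Theta_\ell(s)+\Theta_\ell(s+1)$ are both valid. What your route buys is a self-contained counting proof and, in part (ii), a bijection cleaner than the paper's Dobinski computation; what it costs is one lemma the urn model gives for free, namely that $\Theta_\ell(s)=\sum_{r=0}^{\ell}\binom{\ell}{r}s^{\ell-r}B_r$ counts the restricted-growth completions of length $\ell$ from current maximum $s$ (choose which positions reuse one of the $s$ old letters; the remaining positions form the pattern of a set partition). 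You should state and prove this explicitly.

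Two cautions. First, a small slip in your case analysis: in Case B the choices $\pi_j\le t$ and $\pi_j=t+1$ leave the running maximum at $t+1$; only $\pi_j=t+2$ raises it to $t+2$, so you get two suffix weights there, not three. Second, do not force your bookkeeping onto the printed coefficients: the statement as printed carries typos. The subscripts $j-i+1$ and $j-i$ on $\Theta$ in the last three sums should read $n-j+1$ and $n-j$, and a faithful execution of either your argument or the paper's yields $-\Theta_{n-j}(t+1)\big\{t(t+1)^{j-i-1}-(t+1)t^{j-i-1}\big\}$ in place of the printed third term $-\Theta_{n-j}(t+1)\big\{(t-1)t^{j-i-1}+t(t+1)^{j-i-1}\big\}$. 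A sanity check at $n=4$, $(i,j)=(2,4)$: direct enumeration of $\calr_4$ gives $B_4P\big((2,4)\in\calv^w_4\setminus\calv_4\big)=5$ (the sequences $1111$, $1112$, $1211$, $1222$, $1223$), as does the corrected formula, while the printed one gives $3$. So the ``meticulous bookkeeping'' you anticipate will converge, but to the corrected combination, not the printed one.
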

$\mbox{}$
\par
Since $\calv_n \subset \calv_n^w,$ we have
$$E(V_n^w) = E(V_n) + \sum_{(i,j)\in I_n^{(1)}} P((i,j) \in \calv^w_n\setminus \calv_n) + \sum_{(i,j)\in I_n^{(3)}} P((i,j) \in \calv^w_n\setminus \calv_n),$$
which yields $E(V_n^w).$ The proof of Theorem~\ref{thm3} is included in Section~\ref{thm3p}.
\section{Proof of Theorem~\ref{thm1}}
\label{proof1}
Throughout this section, for any given ordinary generating function $A(x)=\sum_{n\geq0} a_n x^n,$ $x\in \cc,$ we use $\underline A$ to denote the corresponding exponential generating function. That is,
\beq
\underline A(x):=\sum_{n=0}^\infty a_n\frac{x^n}{n!} = \sum_{n=0}^\infty\frac{x^n}{n!} [x^n]A(x),
\feq
where $[x^n]A(x)$ stands for the coefficient of $x^n$ in the generating function $A(x).$
\par
Note that each restricted growth sequence in $\calr_{n,k}$ can be represented as a word in the form $1\pi^{(1)}2\pi^{(2)}\cdots k\pi^{(k)},$
where $\pi^{(j)}$ is an arbitrary subword over the alphabet $[j].$ Therefore, we can rewrite \eqref{Pkxq-def} as
\beqn
\label{eqP1}
P_k(x,q)=x^kL_k(x,q)\prod_{j=1}^{k-1}M_j(x,q),
\feqn
where $L_k(x,q)$ and $M_k(x,q)$ are given by
\beqn
\label{lm}
\begin{array}{rl}
L_k(x,q)&=\sum_{n\geq0}\sum_{\pi\in[k]^n}x^nq^{V(k\pi)},
\\
[2mm]
M_k(x,q)&=\sum_{n\geq0}\sum_{\pi\in[k]^n}x^nq^{V(k\pi(k+1))}.
\end{array}
\feqn
This representation is instrumental in our proof of the following result:
\begin{proposition}
\label{mth1}
For $k\geq 1$,
\beq
P_k(x,q)=\frac{x^k}{1-x\witi M_k(x,q)}\prod_{j=1}^{k-1}\frac{\witi M_j(x,q)}{(1-x\witi M_j(x,q))^2},
\feq
where $\witi M_k(x,q)$ is defined recursively by the equation
\beq
\witi M_k(x,q)=\witi M_{k-1}(x,q)+\frac{xq(\witi M_{k-1}(x,q))^2}{1-x\witi M_{k-1}(x,q)}
\feq
with the initial condition $\witi M_1(x,q)=q.$
\end{proposition}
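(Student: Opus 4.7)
The plan is to refine the identity \eqref{eqP1} by performing a secondary decomposition of each word in $[k]^*$ appearing in the definitions of $L_k$ and $M_k$, parsing it at the occurrences of the largest letter. Every word $\pi\in[k]^*$ factors uniquely as $\alpha_0\,k\,\alpha_1\,k\,\cdots\,k\,\alpha_r$ with each $\alpha_i\in[k-1]^*$, and since the $k$-peaks strictly exceed every letter inside the blocks, the contribution of $\pi$ (inside the surrounding word $k\pi$ or $k\pi(k+1)$) to $V$ splits cleanly into three independent pieces: (i) intra-block visibilities internal to each $\alpha_i$, which coincide with the stand-alone count $V(\alpha_i)$ because the tall flanking peaks cannot be the maximum of anything inside the block; (ii) consecutive peak-to-peak visibilities, each of which is automatically visible because the intervening block contains only letters strictly smaller than $k$; and (iii) peak-to-block visibilities, which count, for each $\alpha_i$, its left-to-right and right-to-left maxima viewed from the flanking peaks.

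Packaging these three types of contributions into a single auxiliary generating function $\witi M_k(x,q)$ that encodes one ``decorated block'' turns the sum over $r$ into a geometric series in $\witi M_k$. I would verify that this yields
\beq
M_k(x,q)=\frac{\witi M_k(x,q)}{1-x\witi M_k(x,q)},\qquad L_k(x,q)=\prod_{j=1}^{k}\frac{1}{1-x\witi M_j(x,q)}.
\feq
Substituting these into \eqref{eqP1} then gives directly the closed form claimed for $P_k(x,q)$.

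To close the argument I would derive the recursion for $\witi M_k$ by induction on $k$. A decorated block on $[k]$ either contains no letter equal to $k$, in which case it is a decorated block on $[k-1]$ and contributes $\witi M_{k-1}$; or it contains at least one $k$, in which case splitting at the first such $k$ exhibits it as two decorated sub-blocks over $[k-1]$ separated by a new peak. The new peak contributes a factor $xq$ (the $x$ for its position and the $q$ for the fresh peak-to-peak visible pair it produces), while an arbitrary number of additional $k$'s to its right is summed as the geometric series $\sum_{m\ge0}(x\witi M_{k-1})^m=1/(1-x\witi M_{k-1})$. Adding the two cases yields exactly $\witi M_k = \witi M_{k-1} + xq\witi M_{k-1}^2/(1-x\witi M_{k-1})$, with the initial value $\witi M_1=q$ obtained from the trivial base case $k=1$ where $L_1(x,q)=1/(1-xq)$ and $M_1(x,q)=q/(1-xq)$.

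The main obstacle I anticipate is the careful bookkeeping for the peak-to-block contributions, since these depend on the left-to-right and right-to-left maximum sequences of the $\alpha_i$'s rather than only on block length. Absorbing this dependence into a single scalar generating function $\witi M_k$ that serves uniformly for both $L_k$ (whose outermost block is flanked by a peak on only one side) and $M_k$ (where both outermost blocks have peaks on both sides) requires a symmetry or boundary-term argument that I expect to be the most delicate point of the proof; once that is in place, the remainder reduces to standard manipulations of geometric series.
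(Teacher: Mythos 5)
Your proposal follows essentially the same route as the paper's proof: the same master factorization $P_k(x,q)=x^kL_k(x,q)\prod_{j=1}^{k-1}M_j(x,q)$, the same parsing of words at occurrences of the maximal letter, and the same auxiliary series $\witi M_k$ with the identical recursion and initial condition, yielding precisely $L_k(x,q)=\prod_{j=1}^{k}\frac{1}{1-x\witi M_j(x,q)}$ and $M_k(x,q)=\frac{\witi M_k(x,q)}{1-x\witi M_k(x,q)}$ as in the paper's Lemmas~\ref{lem1} and~\ref{lem2}. The ``delicate'' boundary bookkeeping you flag at the end is a non-issue in this framework: since $\witi M_k$ is defined directly as a generating function over flanked blocks, the peak-to-block pairs never need to be tracked via left-to-right maxima, and the one-sided versus two-sided flanking is handled by the elementary identities $V(k\pi'k)=V\big((k-1)\pi'(k-1)\big)$ and $V\big(k\pi'(k+1)\big)=V(k\pi'k)$ together with $\witi L_k=L_{k-1}$.
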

\begin{proof}[Proof of Proposition~\ref{mth1}]
In view of \eqref{eqP1} and \eqref{lm}, in order to prove the proposition it suffices to evaluate $L_k(x,q)$ and $M_k(x,q).$
These calculations are the content of the next two lemmas.
\begin{lemma}
\label{lem1}
For all $k\geq1,$
\beq
L_k(x,q)&=\prod_{j=1}^k\frac{1}{1-x\witi M_j(x,q)},
\feq
where $\witi M_k(x,q)$ satisfies the recurrence relation
\beq
\witi M_k(x,q)=\witi M_{k-1}(x,q)+\frac{xq(\witi M_{k-1}(x,q))^2}{1-x\witi M_{k-1}(x,q)},
\feq
with $\witi M_1(x,q)=q.$
\end{lemma}
\begin{proof}[Proof of Lemma~\ref{lem1}]
Any word $k\pi\in[k]^n$ can be written as
\beq
k\pi=k\pi^{(1)}k\pi^{(2)}\cdots k\pi^{(s)}
\feq
for some $s\geq1$ and subwords $\pi^{(j)}\in [k-1].$
Thus, the contribution for a fixed $s$ is $(x\witi M_k(x,q))^{s-1}\witi L_k(x,q)$, where
	\begin{align*}
	\witi L_k(x,q)&=\sum_{n\geq0}\sum_{\pi\in[k-1]^n}x^nq^{V(k\pi)},\\
	\witi M_k(x,q)&=\sum_{n\geq0}\sum_{\pi\in[k-1]^n}x^nq^{V(k\pi k)}.
	\end{align*}
	Hence,
	\begin{align}\label{eqL1}
	L_k(x,q)&=\sum_{s\geq1}(x\witi M_k(x,q))^{s-1}\witi L_k(x,q)=\frac{\witi L_k(x,q)}{1-x\witi M_k(x,q)}.
	\end{align}
	Note that any word $\pi\in[k-1]^n$ can be written as $\pi^{(0)}(k-1)\pi^{(1)}\cdots(k-1)\pi^{(s)}$ with $s\geq0$ and $\pi^{(j)}$ is a word over alphabet $[k-2]$ for all $j$. Thus,
	\begin{align}\label{eqLL1}
	\witi L_k(x,q)&=\sum_{s\geq1}(x\witi M_{k-1}(x,q))^{s-1}\witi L_{k-1}(x,q)=\frac{\witi L_{k-1}(x,q)}{1-x\witi M_{k-1}(x,q)},
	\end{align}
	where we used the fact that $V(k\pi'k)=V\big((k-1)\pi'(k-1)\big)$ for all $\pi'\in[k-2]^n$. Hence, by \eqref{eqL1} and \eqref{eqLL1}, we see that $\witi L_k(x,q)=L_{k-1}(x,q)$, which leads to
	\begin{align*}
	L_k(x,q)&=\frac{L_{k-1}(x,q)}{1-x\witi M_k(x,q)}.
	\end{align*}
	By induction on $k,$ and using the fact that $L_1(x,q)=\frac{1}{1-xq}$, we complete the proof for the formula $L_k(x,q)$.
	
	Now let us write an equation for $\witi M_k(x,q)$. Clearly, $\witi M_1(x,q)=q$, which counts the only empty word according the the visible pairs in $11$. Note that for any word $\pi\in[k-1]^n$, the word $k\pi k$ can be decomposed as $k\pi^{(0)}(k-1)\pi^{(1)}\cdots(k-1)\pi^{(s)}k$ with $\pi^{(j)}$ is a word over alphabet $[k-2]$ for all $j$. Thus,
	$$\witi M_k(x,q)=\witi M_{k-1}(x,q)+\sum_{s\geq1}x^sq(\witi M_k(x,q))^{s+1}
	=\witi M_{k-1}(x,q)+\frac{xq(\witi M_{k-1}(x,q))^2}{1-x\witi M_{k-1}(x,q)},$$
	where we used that fact $V(k\pi'(k-1))=V((k-1)\pi'(k-1))$ for all $\pi'\in[k-2]^n$.
\end{proof}

\begin{lemma}\label{lem2}
	For all $k\geq1$,
	$$M_k(x,q)=\frac{\witi M_k(x,q)}{1-x\witi M_k(x,q)}.$$
\end{lemma}
\begin{proof}[Proof of Lemma~\ref{lem2}]
	 For any word $k\pi\in[k]^n$, the word $k\pi(k+1)$ can be decomposed as either $k\pi'(k+1)$ or $k\pi'k\pi''(k+1),$ where $\pi'$ is a word over alphabet $[k-1]$ and $\pi''$ is a word over alphabet $[k]$. Since $V(k\pi'(k+1))=V(k\pi'k)$, we have
	$$M_k(x,q)=\witi M_k(x,q)+x\witi M_k(x,q)M_k(x,q),$$
	which, by solving for $M_k(x,q)$, complete the proof of Lemma~\ref{lem2}.
\end{proof}

	By Lemmas \ref{lem1} and \ref{lem2} and \eqref{eqP1}, we have
\beq
P_k(x,q)=x^k\prod_{j=1}^{k-1}\frac{1}{1-x\witi M_j(x,q)}\prod_{j=1}^{k-1}\frac{\witi M_j(x,q)}{1-x\witi M_j(x,q)}.
\feq
The proof of Proposition~\ref{mth1} is complete.
\end{proof}

\begin{example}
The first coefficients of the generating function $1+\sum_{k\geq1}P_k(x,q)$ are given by $1+x+2qx^2+5q^2x^3+(2q^4+13q^3)x^4+(18q^5+34q^4)x^5+(11q^7+103q^6+89q^5)x^6+(6q^9+160q^8+478q^7+233q^6)x^7
+(2q^{11}+206q^{10}+1359q^9+1963q^8+610q^7)x^8+(230q^{12}+3066q^{11}+8813q^{10}+7441q^9+1597q^8)x^9.$
\end{example}

With Proposition~\ref{mth1} at hand, we turn now to the study of the expected number of vertexes in $\calg_{n}$. More precisely, we obtain:

\begin{proposition}
\label{pro1}
For all $k\geq1$,
\begin{align*}
\frac{\partial}{\partial q}P_k(x,q)\,\Big|_{q=1}=\frac{x^k}{\prod_{j=1}^k(1-jx)}H_k(x),
\end{align*}
where
\beq
H_k(x)=\sum_{i=1}^{k-1}f_i(x)(1-ix)
	+2x\sum_{i=1}^{k-1}f_i(x)
	+xf_k(x),
\feq
with
\beq
f_i(x):=\frac{1+x\sum_{j=1}^{i-1}\frac{1-jx}{1-(j-1)x}}{(1-(i-1)x)(1-ix)}.
\feq
We use here the usual convention that an empty sum is zero.
\end{proposition}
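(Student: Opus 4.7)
The plan is to exploit Proposition~\ref{mth1} via logarithmic differentiation, reducing the problem to computing $\witi M_j$ and $\partial_q \witi M_j$ at $q=1$.

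First I would evaluate $u_j:=\witi M_j(x,1)$. The recursion for $\witi M_k$ collapses nicely at $q=1$: $u_k=u_{k-1}+\frac{xu_{k-1}^2}{1-xu_{k-1}}=\frac{u_{k-1}}{1-xu_{k-1}}$, so with $u_1=1$ an easy induction gives $u_j=\frac{1}{1-(j-1)x}$, whence $1-xu_j=\frac{1-jx}{1-(j-1)x}$. Substituting into the product formula of Proposition~\ref{mth1} and telescoping yields $P_k(x,1)=\frac{x^k}{\prod_{j=1}^k(1-jx)}$, which matches $\sum_n S_{n,k}x^n$ from~\eqref{SnkIdent} and in particular isolates the prefactor in the target formula.

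Next I take $\log$ of Proposition~\ref{mth1} and differentiate in $q$. Writing $a_j:=\partial_q \witi M_j(x,q)\big|_{q=1}$, this produces
\[
H_k(x)=\frac{xa_k}{1-xu_k}+\sum_{j=1}^{k-1}\frac{a_j}{u_j}+2\sum_{j=1}^{k-1}\frac{xa_j}{1-xu_j}.
\]
Using $1-xu_j=(1-jx)/(1-(j-1)x)$ and $1/u_j=1-(j-1)x$, each of the three sums rewrites in terms of $a_j(1-(j-1)x)$. Matching this with the claimed decomposition $H_k=\sum_{i=1}^{k-1}f_i(1-ix)+2x\sum_{i=1}^{k-1}f_i+xf_k$ forces the identification
\[
a_j=\frac{f_j(x)(1-jx)}{1-(j-1)x}=\frac{1+x\sum_{i=1}^{j-1}\frac{1-ix}{1-(i-1)x}}{(1-(j-1)x)^2}.
\]
So the whole proposition reduces to establishing this closed form for $a_j$.

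I would prove the closed form by induction on $j$. Differentiating the recursion $\witi M_j=\witi M_{j-1}+\frac{xq\witi M_{j-1}^2}{1-x\witi M_{j-1}}$ in $q$ and setting $q=1$ gives, after plugging in $u_{j-1}=1/(1-(j-2)x)$,
\[
a_j=a_{j-1}\left(1+\frac{x(2-(2j-3)x)}{(1-(j-1)x)^2}\right)+\frac{x}{(1-(j-2)x)(1-(j-1)x)}.
\]
The key algebraic miracle is that $(1-(j-1)x)^2+x(2-(2j-3)x)=(1-(j-2)x)^2$, so the bracketed factor equals $\left(\frac{1-(j-2)x}{1-(j-1)x}\right)^2$. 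This exactly converts the inductive hypothesis for $a_{j-1}$ into the denominator $(1-(j-1)x)^2$, while the additive term $\frac{x}{(1-(j-2)x)(1-(j-1)x)}=\frac{x(1-(j-1)x)/(1-(j-2)x)}{(1-(j-1)x)^2}$ supplies precisely the new summand $\frac{1-(j-1)x}{1-(j-2)x}$ needed to extend the sum in the numerator from $i\leq j-2$ to $i\leq j-1$.

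The main (mild) obstacle is verifying the identity $(1-(j-1)x)^2+2x-(2j-3)x^2=(1-(j-2)x)^2$ that powers the induction; everything else is bookkeeping. Once $a_j$ is established, substituting back into the expression for $H_k$ derived from logarithmic differentiation and grouping terms produces the stated formula, completing the proof.
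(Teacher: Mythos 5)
Your proposal is correct and follows essentially the same route as the paper: evaluate $\witi M_k(x,1)=\frac{1}{1-(k-1)x}$ by induction, differentiate the recurrence of Proposition~\ref{mth1} at $q=1$ to obtain a first-order linear recursion for $a_k=\frac{\partial}{\partial q}\witi M_k(x,q)\big|_{q=1}$, solve it by induction, and substitute into the (logarithmically) differentiated product formula for $P_k$. You actually supply the details the paper leaves implicit --- the explicit closed form $a_j=\bigl(1+x\sum_{i=1}^{j-1}\tfrac{1-ix}{1-(i-1)x}\bigr)/(1-(j-1)x)^2$ and the key identity $(1-(j-1)x)^2+x\bigl(2-(2j-3)x\bigr)=(1-(j-2)x)^2$ --- and your recursion for $a_k$ is the correct one, whereas the paper's displayed recursion suffers from index/typo slips (it would give $a_2=\frac{1+x}{(1-x)^2}$ rather than the correct $\frac{1+x-x^2}{(1-x)^2}$).
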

\begin{proof}[Proof of Proposition~\ref{pro1}]
By Proposition \ref{mth1}, the generating function $\witi M_k(x,q)$ satisfies
\beq
\witi M_k(x,q)=\witi M_{k-1}(x,q)+\frac{xq(\witi M_{k-1}(x,q))^2}{1-x\witi M_{k-1}(x,q)}
\feq
with $\witi M_1(x,q)=q$. Thus,
$$\witi M_k(x,1)=\frac{\witi M_{k-1}(x,1)}{1-x\witi M_{k-1}(x,1)}$$
with $\witi M_1(x,1)=1$. Hence, by induction on $k$, we have
$\witi M_k(x,1)=\frac{1}{1-(k-1)x}$.

Moreover, by differentiation the recurrence relation at $q=1$, we obtain
\begin{align*}
\frac{\partial}{\partial q}\witi M_k(x,q)\mid_{q=1}&=
\frac{\partial}{\partial q}\witi M_{k-1}(x,q)\mid_{q=1}\\
&+\frac{x(\witi M_{k-1}(x,1))^2+x\witi M_{k-1}(x,1)\frac{\partial}{\partial q}\witi M_{k-1}(x,q)\mid_{q=1}(2-x\witi M_{k-1}(x,1))}{(1-x\witi M_{k-1}(x,1))^2},
\end{align*}
which, by $\witi M_k(x,1)=\frac{1}{1-(k-1)x}$, implies
$$\frac{\partial}{\partial q}\witi M_k(x,q)\mid_{q=1}=
\frac{x}{(1-kx)^2}+\frac{(1-(k-1)x)^2}{(1-kx)^2}\frac{\partial}{\partial q}\witi M_{k-1}(x,q)\mid_{q=1}.$$
We can now complete the proof of the proposition by using induction on $k$ and the initial condition $\frac{\partial}{\partial q}\witi M_1(x,q)\mid_{q=1}=1.$
\end{proof}

By Proposition \ref{pro1}, we have:
\begin{align*}
&\frac{\partial}{\partial q}P_k(x,q)\mid_{q=1}-\frac{x}{1-kx}\frac{\partial}{\partial q}P_{k-1}(x,q)\mid_{q=1}\\
&=\frac{x^k}{\prod_{j=1}^k(1-jx)}\left(
\frac{1+x\sum_{j=1}^{k-2}\frac{1-jx}{1-(j-1)x}}{1-(k-1)x}
+\frac{x+x^2\sum_{j=1}^{k-1}\frac{1-jx}{1-(j-1)x}}{(1-(k-1)x)(1-kx)}\right)
\end{align*}
with $\frac{\partial}{\partial q}P_1(x,q)\mid_{q=1}=\frac{x^2}{1-x}$. For all $k\geq2$, define
$$T_k(x)=\frac{x^k}{\prod_{j=1}^k(1-jx)}
\cdot\frac{1+x\sum_{j=1}^{k-2}\frac{1-jx}{1-(j-1)x}}{1-(k-1)x}.$$
Then,
\begin{align}\label{eqrecP}
(1-kx)\frac{\partial}{\partial q}P_k(x,q)\mid_{q=1}-x\frac{\partial}{\partial q}P_{k-1}(x,q)\mid_{q=1}&=\frac{T_k(x)+T_{k+1}(x)}{1-(k-1)x}
\end{align}
with $\frac{\partial}{\partial q}P_1(x,q)\mid_{q=1}=\frac{x^2}{(1-x)^2}$.

In order to solve \eqref{eqrecP}, we first study the corresponding exponential generating functions $\underline Q_k(x)$ and $\underline T_k(x)$ of the ordinary generating functions $Q_k(x) = \frac{\partial}{\partial q}P_k(x,q)\mid_{q=1}$ and $T_k(x)$, respectively. In other words,
\beq
 \underline Q_k(x)=\sum_{n\geq0}\frac{x^n}{n!}[x^n]\frac{\partial}{\partial q}P_k(x,q)\,\Big|_{q=1}
\feq
and
\beq
\underline T_k(x)=\sum_{n\geq0}\frac{x^n}{n!}[x^n]T_k(x), \qquad \qquad \underline T(x,y)=\sum_{k\geq2}\underline T_k(x)y^k.
\feq
\begin{lemma}\label{lem4}
The generating function $\underline T(x,y)=\sum_{k\geq2}\underline T_k(x)y^k$ is given by
\begin{align*}
\underline T(x,y)&=y^3\int_0^x(x-t)e^{ye^t-y}\int_0^tEi(1,ye^r)e^{ye^r+2r}drdt\\
&\qquad\qquad+y\int_0^x(t-x)e^{ye^t-y}(Ei(1,ye^t)e^{ye^t}(ye^t-1)-ye^t)dt\\
&\qquad\qquad\qquad\qquad+y(1-y)\int_0^x(t-x)e^{ye^t}Ei(1,ye^t)dt,
\end{align*}
where $Ei(1,z)=\int_1^{\infty}\frac{e^{-zt}}{t}dt$.
\end{lemma}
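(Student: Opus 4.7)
The plan is to decompose $T_k(x)$ into a finite sum of elementary rational functions with at most double poles, pass to exponential generating functions via the classical identity $\sum_n S_{n,k}\,x^n/n! = (e^x-1)^k/k!$, sum over $k \geq 2$ against $y^k$, and identify the exponential integral $Ei(1, ye^t)$ through its series expansion $Ei(1,z) = -\gamma - \ln z - \sum_{m \geq 1} (-z)^m/(m\cdot m!)$.

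First, I would simplify $\alpha_k(x) := 1 + x\sum_{j=1}^{k-2}\frac{1-jx}{1-(j-1)x}$ using the algebraic identity $\frac{1-jx}{1-(j-1)x} = 1 - \frac{x}{1-(j-1)x}$, so that
\[
T_k(x) = \frac{x^k}{(1-(k-1)x)\prod_{j=1}^k(1-jx)}\Bigl(1 + (k-2)x - x^2 - x^2\sum_{i=1}^{k-3}\frac{1}{1-ix}\Bigr).
\]
Since $1-(k-1)x$ already appears in the denominator product, this produces a double pole at $x = 1/(k-1)$; in the fourth term, the extra factor $1-ix$ yields a second double pole at $x = 1/i$ whenever $1 \leq i \leq k-3$. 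Partial fractions then rewrite each piece as a linear combination of $\frac{1}{1-jx}$ (whose EGF is $e^{jx}$) and $\frac{1}{(1-jx)^2}$ (whose EGF is $(1+jx)e^{jx}$), and the prefactor $x^k$ is absorbed by the iterated-integration identity $\sum_n \frac{a_{n-k}}{n!}x^n = \int_0^x \frac{(x-t)^{k-1}}{(k-1)!}\,\hat A(t)\,dt$ (where $\hat A$ denotes the EGF of the coefficient sequence $(a_n)$), turning $\underline T_k(x)$ into a sum of integrals of simple exponentials against polynomial kernels.

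Summation over $k \geq 2$ against $y^k$ then yields the stated closed form. The core identity $\sum_{k \geq i}\frac{(-1)^{k-i}\binom{k}{i}}{k!}y^k = \frac{y^i e^{-y}}{i!}$, combined with $\sum_{i \geq 0}(y e^t)^i/i! = e^{y e^t}$, produces the prefactor $e^{y e^t - y}$ appearing throughout the formula. Sums carrying an additional $\frac{1}{k-i}$ weight — which arise when the overlapping double poles are resolved via the geometric evaluation $\sum_m (k-1)^m i^{n-m} = (i^{n+1} - (k-1)^{n+1})/(i - (k-1))$ — collapse to series of the form $\sum_{m \geq 1}(y e^t)^m/(m\cdot m!) = -\gamma - \ln(y e^t) - Ei(1, y e^t)$, and so $Ei(1, y e^t)$ enters naturally. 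Swapping the order of the nested summation $\sum_{k \geq 2}\sum_{i=1}^{k-3}$ converts it into the double integral $\int_0^x \int_0^t \cdots dr\,dt$ that gives the first summand in the lemma; the residue calculation at the double pole $x = 1/(k-1)$ produces the second summand with its characteristic $(y e^t - 1)$ factor; and the contributions from $1 + (k-2)x - x^2$ (which carry no inner sum over $i$) yield the third.

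The main obstacle will be the careful bookkeeping of the two types of double poles and the verification that the spurious $\gamma$ and $\ln y$ contributions from the $Ei$-series cancel out across the three summands, since $\underline T(x,y)$ must be an honest formal power series in $y$ beginning at $y^2$. Handling the boundary cases $k = 2, 3$ (where the inner sum $\sum_{i=1}^{k-3}$ is empty) and matching against the direct computation $\underline T_2(x) = e^{2x} - (1+x)e^x$ provides a useful consistency check.
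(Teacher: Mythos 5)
Your overall route --- a full partial-fraction analysis of $T_k(x)$ followed by explicit resummation over $k$ --- is genuinely different from the paper's. The paper never decomposes $T_k$ by poles: it telescopes the inner sum, using $\bigl(1+x\sum_{j=1}^{k-2}\tfrac{1-jx}{1-(j-1)x}\bigr)-\bigl(1+x\sum_{j=1}^{k-3}\tfrac{1-jx}{1-(j-1)x}\bigr)=x\,\tfrac{1-(k-2)x}{1-(k-3)x}$, to obtain a two-term recurrence in $k$,
\[
(1-(k-3)x)(1-(k-1)x)T_k(x)-x(1-(k-3)x)T_{k-1}(x)=\frac{x^{k+1}}{\prod_{j=1}^{k-3}(1-jx)},
\]
converts it into a fourth-order linear ODE relating $\underline T_k$ and $\underline T_{k-1}$ (via the dictionary ``multiplying the ordinary generating function by $x$ corresponds to integrating the exponential one,'' together with $\sum_{n\geq k}S_{n,k}x^n/n!=(e^x-1)^k/k!$ for the inhomogeneous term), then multiplies by $y^k$, sums over $k\geq3$ to get a PDE for $\underline T(x,y)$ with initial data $\underline T(0,y)=\partial_x\underline T(0,y)=0$, $\partial_x^2\underline T(0,y)=y^2$, $\partial_x^3\underline T(0,y)=y^2+y^3$, and solves that PDE with Maple. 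So in the paper the exponential integrals emerge from a computer-algebra solve; your plan would derive them by hand, which, if completed, would be more self-contained than the paper's proof (whose final, essential step is delegated to software), at the cost of far heavier bookkeeping.

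As written, though, your plan has a concrete internal incompatibility. The two devices you invoke --- absorbing the prefactor $x^k$ through the $k$-fold kernel $(x-t)^{k-1}/(k-1)!$, and collapsing the $k$-sum through $\sum_{k\geq i}\frac{(-1)^{k-i}\binom{k}{i}}{k!}y^k=\frac{y^ie^{-y}}{i!}$ --- cannot act on the same piece. Once the $k$-fold kernel is in place, a typical term of the $k$-sum has the shape $\sum_k\frac{y^k(x-t)^{k-1}}{(k-1)!}\cdot\frac{(-1)^{k-i}}{(k-i)!}(\cdots)$, with \emph{two} $k$-dependent factorials in the denominator; that is a Bessel-type series and does not reduce to the $e^{ye^t-y}$ prefactors in the lemma. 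The collapse identity works only when the kernel is $k$-free, i.e.\ when the $k$-growing part of $T_k$ is kept intact as the Stirling block $\frac{x^{k'}}{\prod_{j=1}^{k'}(1-jx)}$ (exponential generating function $(e^x-1)^{k'}/k'!$) and only the \emph{bounded} powers of $x$ coming from $1+(k-2)x-x^2-x^2\sum_i(1-ix)^{-1}$ are peeled off; consistently, the kernels in the target formula are exactly linear, $(x-t)$, matching a fixed $x^2$ prefactor (double integration), never a $k$-fold one. This regrouping is, in effect, what the paper's recurrence-to-PDE translation automates. Two smaller points: your collapse to the exponential integral should read $\sum_{m\geq1}\frac{(-ye^t)^m}{m\cdot m!}=-\gamma-\ln(ye^t)-Ei(1,ye^t)$ (you state the alternating form correctly at first, then drop the sign later); and your consistency anchor $\underline T_2(x)=e^{2x}-(1+x)e^x$, which is what the displayed definition of $T_k$ gives at $k=2$, differs from the value $T_2(x)=x^2/(1-x)$, i.e.\ $\underline T_2(x)=e^x-1-x$, with which the paper actually runs its recurrence and which produces the stated initial condition $\partial_x^3\underline T(0,y)=y^2+y^3$; you would need to resolve this normalization of the $k=2$ term before any matching of constants can succeed.
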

\begin{proof}[Proof of Lemma~\ref{lem4}]
By the definition of $T_k(x)$, we have:
$$(1-(k-3)x)(1-(k-1)x)T_k(x)-x(1-(k-3)xT_{k-1}(x)=\frac{x^{k+1}}{\prod_{j=1}^{k-3}(1-jx)}$$
with $L_2(x)=\frac{x^2}{1-x}$.
Rewriting this equation in terms of exponential generating functions, we obtain:
\begin{align*}
&\frac{d^4}{dx^4}\underline T_k(x)-(2k-4)\frac{d^3}{dx^3}\underline T_k(x)+(k-3)(k-1)\frac{d^2}{dx^2}\underline T_k(x)\\
&\qquad\qquad\qquad\qquad-\frac{d^3}{dx^3}\underline T_{k-1}(x)+(k-3)\frac{d^2}{dx^2}\underline T_{k-1}(x)=\frac{(e^x-1)^{k-3}}{(k-3)!},
\end{align*}
where we used \eqref{SnkIdent} and the fact that $\sum_{n\geq k}S_{n,k}\frac{x^n}{n!}=\frac{(e^x-1)^k}{k!}.$
\par
Multiplying both sides of the last recurrence by $y^k$ and summing over $k\geq 3$, we obtain:
\begin{align*}
&\frac{\partial^4}{\partial x^4}\big(\underline T(x,y)-\underline T_2(x)y^2\big)
-2y\frac{\partial^4}{\partial x^3\partial y}\big(\underline T(x,y)- \underline T_2(x)y^2\big)
+4\frac{\partial^3}{\partial x^3}\big(\underline T(x,y)- \underline T_2(x)y^2\big)
\\
&
\quad
+y\frac{\partial}{\partial y}\Big(y\frac{\partial^3}{\partial x^2\partial y}\big(\underline T(x,y)-\underline T_2(x)y^2\big)\Big)
-4y\frac{\partial^3}{\partial x^2\partial y}\big(\underline T(x,y)-\underline T_2(x)y^2\big)
\\
&
\quad
+3\frac{\partial^2}{\partial x^2}\big(\underline T(x,y)-\underline T_2(x)y^2\big)
-y\frac{\partial^3}{\partial x^3}\underline T(x,y)+y\frac{\partial^3}{\partial x^2\partial y}\big(y \underline T(x,y)\big)
\\
&
\quad
-3y\frac{\partial^2}{\partial x^2} \underline T(x,y)=y^3e^{y(e^x-1)},
\end{align*}
where $\underline T_2(x)=e^x-1-x$. Note that
\beq
\underline T(0,y)=\frac{\partial}{\partial x}\underline T(x,y)\mid_{x=0}=0, \qquad
\frac{\partial^2}{\partial x^2}\underline  T(x,y)\mid_{x=0}=y^2,\qquad
\frac{\partial^3}{\partial x^3}\underline T(x,y)\mid_{x=0}&=&y^2+y^3.
\feq
Solving the partial differential equation with these initial conditions, we obtain the result in Lemma~\ref{lem4}.
\end{proof}

Finally,

\begin{align*}
&\frac{d^2}{dx^2}\underline Q_k(x)-(2k-1)\frac{d}{dx}\underline Q_k(x)+k(k-1)\underline Q_k(x)\\
&-\frac{d}{dx}\underline Q_{k-1}(x)+(k-1)\underline Q_{k-1}(x)=\frac{d^2}{dx^2}(\underline T_k(x)+\underline T_{k+1}(x))
\end{align*}
with $\underline Q_1(x)=1+(x-1)e^x$.
\par
Recall $\underline Q(x,y)=\sum_{k\geq1}\underline Q_k(x)y^k$. Multiplying both sides of this recurrence equation by $y^k$ and summing over $k\geq2$, we obtain
\begin{align*}
&\frac{\partial^2}{\partial x^2}(\underline Q(x,y)-\underline Q_1(x)y)
-2y\frac{\partial^2}{\partial x\partial y}(\underline Q(x,y)-\underline Q_1(x)y) \\ &\quad +\frac{\partial}{\partial x}(\underline Q(x,y)-\underline Q_1(x)y) +y\frac{\partial}{\partial y}\left(y\frac{\partial}{\partial y}(\underline Q(x,y)-\underline Q_1(x)y) \right)\\
&\quad -y\frac{\partial}{\partial y}(\underline Q(x,y)-\underline Q_1(x)y)
-y\frac{\partial}{\partial x}\underline Q(x,y)
+y\frac{\partial}{\partial y}(y\underline Q(x,y)) -y\underline Q(x,y)\\
&=\frac{\partial^2}{\partial x^2}(\underline T(x,y)+1/y(\underline T(x,y)-\underline T_2(x)y^2))
\end{align*}
with $\underline Q(0,y)=0$ and $\frac{\partial}{\partial x}\underline Q(x,y)\mid_{x=0}=0$.
This along with Lemma \ref{lem4} and an aid of Maple, yields the explicit formula for the generating function $\underline Q(x,y)$ stated in Theorem~\ref{thm1}. \hfill\hfill\qed

\section{Proof of Theorem~\ref{thm2}}
\label{pr2}
The proof relies on the use of a generator of a uniformly random set partition of $[n]$ proposed by Stam \cite{stam1}.
We next describe Stam's algorithm for a given $n.$
\begin{enumerate}
\item For $m\in\nn,$ let $\mu_n(m)=\frac{m^n}{em! B_n}.$ Dobinski's formula \eqref{doob} shows that $\mu_n(\,\cdot\,)$ is a probability distribution on $\nn$.
\par
At time zero, choose a random $M\in\nn$ distributed according to $\mu_n,$ and arrange $M$ empty and unlabeled boxes.
\item Arranges $n$ balls labeled by integers from the set $[n].$
\par
At time $i\in [n],$ place the ball `$i$' into of one the $M$ boxes, chosen uniformly at random. Repeat until there are no balls remaining.
\item Label the boxes in the order that they get occupied by the balls. Once a box is labeled, the label does not change anymore.
\item Form a set partition $\pi$ of $[n]$ with $i$ in the $k$-th block if and only if ball `$i$" is in the $k$-th box.
\end{enumerate}
Let $N_i$ be the random number of nonempty boxes right after placing the $i$-th ball and $X_i$ be the label
of the box where the $i$-th ball was placed. Notice that if the $i$-th ball is dropped in an empty box, then
$X_i=N_{i-1}+1$ and $N_{i} = N_{i-1}+1.$ Otherwise, if the box was occupied previously, $X_i=X_j$ where $j<i$ is the first ball that was dropped in that box and $N_i = N_{i-1}$. Then, $X:=X_1\cdots X_n$ is the random set partition of $[n]$ produced  by the algorithm.
\par
We denote by $P_m(\,\cdot\,)$ conditional probability distribution $P(\,\cdot\,|\,M=m).$ Clearly $N_1=1$, $N_i\leq i,$ and
\beq
P_m(N_{i+1}=t+1|N_i=t) = \frac{m-t}{m}
\qquad
\mbox{\rm and}
\qquad
P_m(N_{i+1}=t|N_i=t) = \frac{t}{m}.
\feq
Let $\alpha_{i,t}(m):=P_m(N_i=t).$ Then, taking in account that
\beq
P_m(N_i=t) = P_m(N_i=t, N_{i-1}=t-1) + P_m(N_i=t, N_{i-1}=t),
\feq
we obtain:
\beq
\alpha_{i,t} (m)=
\left\{
\begin{array}{ll}
\frac{t}{m} \alpha_{i-1,t}(m) + \frac{m-t+1}{m}\alpha_{i-1,t-1}(m) & \text{if $2\leq t\leq m$ and $t\leq i$}
\\
[2mm]
0 & \text{if $t>i$ or $t>m$}
\\
[2mm]
\frac{1}{m^{i-1}} & \text{if $t=1$ and $1\leq i$}.
\end{array}
\right.
\feq
A comparison with \eqref{compa} reveals that for $t\leq m,$
\beqn \label{Nis}
P_m(N_i=t)=\frac{S_{i,t}}{m^i} \frac{m!}{(m-t)!}.
\feqn
In addition,
\beq
P_m(X_{i+1}=\ell|N_i=t) =
\left\{
\begin{array}{ll}
\frac{1}{m} & \quad \text{if} \quad \ell \leq t
\\
[2mm]
\frac{m-t}{m} & \quad \ell = t +1
\\
[2mm]
0 & \quad  \mbox{otherwise}.
\end{array}
\right.
\feq
Notice that some of the boxes may remain empty at the end of the algorithm's run.
\par
In view of \eqref{V_n_exp}, in order to calculate $E(V_n),$  we need to evaluate
\beq
E\big(e_n(i,j)\big) = E\big[E_M\big(e_n(i,j)\big)\big] =  E\Big(P_M\Big(\max_{i<\ell<j} X_\ell < \min\{X_i, X_j\}\Big)\Big)
\feq
for $(i,j) \in I_n^{(3)}.$ For any constant $m\in\nn$ we have:
\beqn \label{Tij-exact}
&&P_m\Big(\max_{i<\ell<j} X_\ell < \min\{X_i, X_j\}\Big)
\nonumber
\\
&& \quad = \sum_{t=1}^{(i-1) \wedge m}P_m\Big(\max_{i<\ell<j} X_\ell < \min\{X_i, X_j\}\,\Big|\,N_{i-1}=t\Big) P_m(N_{i-1}=t)
\nonumber
\\
&& \quad = \sum_{t=1}^{(i-1)\wedge m}\,\sum_{k=1}^{m\wedge (t+1)} P_m\Big(\max_{i<\ell<j} X_\ell < \min\{k, X_j\}\,\Big|\,N_{i-1}=t, X_i = k\Big)   \nonumber \\
&& \quad \qquad \qquad \qquad \qquad \times  P_m(X_i = k_i\,|\,N_{i-1}=t) P_m(N_{i-1}=t)  \nonumber \\
&& \quad = \frac{m!}{m^i} \sum_{t=1}^{(i-1)\wedge m}\frac{S(i-1,t)}{(m-t)!} \sum_{k=1}^t  P_m\Big(\max_{i<\ell<j} X_\ell < \min\{k, X_j\}\,\Big|\,N_{i-1}=t\Big) \nonumber \\
&& \qquad + \frac{m!}{m^i} \sum_{t=1}^{(i-1)\wedge (m-1)}  P_m\Big(\max_{i<\ell<j} X_\ell < \min\{t+1, X_j\}\,\Big|\,N_i=t+1\Big) \frac{S(i-1,t)}{(m-t-1)!}. \label{PTij}
\feqn
Furthermore, for any $a\leq t \leq m$ we have:
\beqn
&& P_m\Big(\max_{i<\ell<j} X_\ell < \min\{a, X_j\}\,\Big|\,N_i=t\Big)
\nonumber
\\
&&\quad = \sum_{b=1}^{a-1}P_m\Big(\max_{i<\ell<j} X_\ell < \min\{a, X_j\},X_{i+1}=b\,\Big|\,N_i=t\Big)
\nonumber
\\
&& \quad = \sum_{b=1}^{a-1} P_m\Big(\max_{i<\ell<j} X_\ell < \min\{a, X_j\}\,\Big|\,N_{i+1}=t\Big)  P_m(X_{i+1}=b\,|\,N_i=t)
\nonumber
\\
&& \quad =\frac{1}{m} \sum_{b=1}^{a-1} P_m\Big(\max_{i<\ell<j} X_\ell < \min\{a, X_j\}\,\Big|\,N_{i+1}=t\Big).
\nonumber
\feqn
Iterating, we obtain:
\beqn
&&
P_m\Big(\max_{i<\ell<j} X_\ell < \min\{a, X_j\}\,\Big|\,N_i=t\Big)
\nonumber
\\
&&
\qquad \qquad \qquad =\frac{1}{m^{j-i-1}} \sum_{b_{i+1}=1}^{a-1}\cdots \sum_{b_{j-1}=1}^{a-1} P_m\Big(\max_{i<\ell<j} b_\ell < X_j\,\Big|\,N_{j-1}=t\Big) \label{as_gen}.
\feqn
Denote $p:=\max_{i<\ell<j} b_\ell$ and $q:=|\{\ell\in (i,j):b_\ell=p \}|.$ In this terms, the last summation can be written as
\beq
&&\sum_{b_{i+1}=1}^{a-1}\cdots \sum_{b_{j-1}=1}^{a-1} P_m\Big(\max_{i<\ell<j} b_\ell < X_j\,\Big|\,N_{j-1}=t\Big)
\\
&&
\qquad
= \sum_{p=1}^{a-1} \sum_{q=1}^{j-i-1} \binom{j-i-1}{q} (p-1)^{j-i-1-q}P_m(X_j>p|N_{j-1}=t) \\
&&
\qquad
= \sum_{p=1}^{a-1} \left( p^{j-i-1} - (p-1)^{j-i-1}\right) \left(1 -\frac{p}{m} \right)
= \Big(1-\frac{a}{m}\Big)(a-1)^{j-i-1}+\frac{1}{m}\Psi_{i-j}(a-1),
\feq
where $\Psi_{i-j}$ is introduced in \eqref{rho}. Thus,
\beqn
P_m\Big(\max_{i<\ell<j} X_\ell < \min\{a, X_j\}\,\Big|\,N_i=t\Big)   =
\frac{1}{m^{j-i}} \left( (m-a)(a-1)^{j-i-1}+\Psi_{i-j}(a-1) \right). \label{as_gen2}
\feqn
\par
Inserting \eqref{Nis} and \eqref{as_gen2} into \eqref{PTij} and taking expectation with respect to $\mu_n(\,\cdot\,)$, we obtain:
\beqn
&&eB_n P(e_n(i,j)=1) = \sum_{t=1}^{i-1} S_{i-1,t}\Psi_{i-j}(t-1)  \sum_{m=t}^{\infty} \frac{m^{n-j+1}}{(m-t)!}   \nonumber   \\
&& \qquad\qquad + \sum_{t=1}^{i-1} S_{i-1,t}  \sum_{a=1}^t \left( -a(a-1)^{j-i-1}+\Psi_{i-j}(a-1) \right) \sum_{m=t}^{\infty}\frac{m^{n-j}}{(m-t)!} \nonumber \\
&&\qquad\qquad  + \sum_{t=1}^{i-1}S_{i-1,t}t^{j-i-1} \sum_{m=t+1}^\infty  \frac{m^{n-j+1}}{(m-t-1)!} \nonumber \\
&&\qquad\qquad  + \sum_{t=1}^{i-1} S_{i-1,t}  \left(-(t+1)t^{j-i-1}+\Psi_{i-j}(t)\right)\sum_{m=t+1}^\infty \frac{m^{n-j}}{(m-t-1)!}, \nonumber
\feqn
as desired.
\hfill\hfill\qed
\section{Proof of Theorem~\ref{thm3}}
\label{thm3p}
Write:
\beq
P\big((i,j)\in \calv^w_n\setminus \calv_n\big) = E\Big(P_M\Big(\max_{i<\ell<j}X_\ell = \min\{X_i,X_j\}\Big)\Big).
\feq
Case I) If $(i,j)\in I_n^{(3)}$, then similarly to the calculation in \eqref{Tij-exact}, for any $m\in\nn$ we have:
\beqn
&&P_m\Big(\max_{i<\ell<j}X_\ell = \min\{X_i,X_j\}\Big)
\nonumber
\\
&&
\quad
= \frac{m!}{m^i} \sum_{t=1}^{(i-1)\wedge m}\frac{S_{i-1,t}}{(m-t)!}
\sum_{a=1}^t
P_m\Big(\max_{i<\ell<j} X_\ell = \min\{a, X_j\}\,\Big|\,N_i=t\Big)  \nonumber \\
&&
\quad
\quad + \frac{m!}{m^i} \sum_{t=1}^{(i-1)\wedge (m-1)} P_m\Big(\max_{i<\ell<j} X_\ell = \min\{t+1, X_j\}\,\Big|\,N_i=t+1\Big)
\frac{S_{i-1,t}}{(m-t-1)!}.
\label{PTwij}
\feqn
Similarly to \eqref{as_gen}, for $a\leq t \leq m$ we have:
\beqn
&& P_m\Big(\max_{i<\ell<j} X_\ell = \min\{a, X_j\}\,\Big|\,N_i=t\Big)
=
\nonumber
\\
&&
\quad
 P_m\Big(\max_{i<\ell<j} X_\ell =a,\, X_j\geq a\,\Big|\,N_i=t\Big) +
P_m\Big(\max_{i<\ell<j} X_\ell =X_j, \, X_j<a\,\Big|\,N_i=t\Big). \label{pmxl}
\feqn
The first term on the right hand-side of \eqref{pmxl} can be written as
\beqn
&& \quad P_m\Big(\max_{i<\ell<j}X_\ell=a\,\Big|\,N_i = t\Big)P_n\big(a \leq  X_j\big|N_{j-1}=t\big)
\nonumber
\\
&&
\qquad\qquad
= \Big\{P_m\Big(\max_{i<\ell<j}X_\ell\leq a\,\Big|\,N_i = t\Big)-P_m\Big(\max_{i<\ell<j}X_\ell\leq a-1\,\Big|\,N_i = t\Big)\Big\}\frac{m-a+1}{m}
\nonumber
\\
&&
\qquad\qquad = \frac{1}{m^{j-i}}(m-a+1)\big(a^{j-i-1}-(a-1)^{j-i-1}\big). \label{PmXla}
\feqn
Similarly, the second term in right hand side of \eqref{pmxl} contributes:
\beqn
&&\sum_{b=1}^{a-1} P_m\Big(\max_{i<\ell<j}X_\ell=b,\,X_j=b\,\Big|\,N_i = t\Big)
\nonumber
\\
&&
\qquad\qquad
= \sum_{b=1}^{a-1} P_m\Big(\max_{i<\ell<j}X_\ell=b\,\Big|\,N_i = t\Big)P_m(X_j = b|N_{j-1}=t)
\nonumber
\\
&&
\qquad\qquad
= \frac{1}{m}\sum_{b=1}^{a-1} P_m\Big(\max_{i<\ell<j}X_\ell=b\,\Big|\,N_i = t\Big)
\nonumber
\\
&&
\qquad\qquad
= \frac{1}{m} \sum_{b=1}^{a-1} \Big\{\Big(\frac{b}{m}\Big)^{j-i-1}-\Big(\frac{b-1}{m}\Big)^{j-i-1}\Big\}
= \frac{(a-1)^{j-i-1}}{m^{j-i}}. \label{PmXla2}
\feqn
Inserting \eqref{PmXla} and \eqref{PmXla2} back into \eqref{pmxl}, we obtain:
\beq
&&
P_m\Big(\max_{i<\ell<j}X_\ell=\min\{a, X_j\}\,\Big|\,N_i = t\Big)
\\
&&
\qquad
\qquad
= \frac{1}{m^{j-i}}\left( (m-a+1)a^{j-i-1} - (m-a)(a-1)^{j-i-1} \right).
\feq
Plugging the result into \eqref{PTwij} and taking expectation with respect to $\mu_n(\,\cdot\,)$ gives:
\beq
&&
eB_n P\big((i,j)\in \calv^w_n\setminus \calv_n\big)
\\
&&
\qquad
=
\sum_{m=1}^{\infty} \sum_{t=1}^{(i-1)\wedge m} \frac{m^{n-j}}{(m-t)!} S_{i-1,t}
\Big(mt^{j-i-1}-t^{j-i} + t^{j-i-1}+2\sum_{a=1}^{t-1} a^{j-i-1}\Big)
\\
&&
\qquad\quad
+ \sum_{m=1}^\infty \sum_{t=1}^{(i-1)\wedge (m-1)} \frac{m^{n-j}}{(m-t-1)!} S_{i-1,t} \left( (m-t)\big((t+1)^{j-i-1}-t^{j-i-1}\big) + t^{j-i-1} \right).
\feq
The result in case (i) follows from this formula by changing the order of summation and applying \eqref{Blextension}.
\\\
$\mbox{}$
\\
Case (ii) If $(i,j) \in I^{(1)}_n,$ then
\beq
&&P_m\Big(\max_{i<\ell<j}X_\ell = \min\{1,X_j\}\Big) = \frac{1}{m^{j-i-1}}.
\feq
Hence, an application of Dobinski's identity \eqref{doob} yields
\beq
P\big((i,j)\in \calv^w_n\setminus \calv_n\big) = \frac{1}{eB_n} \sum_{m=1}^\infty  \frac{m^{n-j+i+1}}{m!} = \frac{B_{n-j+i+1}}{B_n},
\feq
as desired.
\hfill\hfill\qed

{\small 
}
\end{document}